\newtheorem{theorem}{Theorem}[section]
\newtheorem{lemma}[theorem]{Lemma}
\theoremstyle{definition}
\newtheorem{definition}[theorem]{Definition}
\newtheorem{proposition}[theorem]{Proposition}
\theoremstyle{remark}
\newtheorem{remark}[theorem]{Remark}
\numberwithin{equation}{section}
\newcommand{\G}{{\cal G}}
\newcommand{\C}{{\mathbb C}}
\renewcommand{\a}{\ensuremath{\alpha}}
\renewcommand{\l}{\ensuremath{\lambda}}
\newcommand{\g}{\ensuremath{\mathfrak{G}}}
\renewcommand{\b}{\ensuremath{\beta}}
\newcommand{\Z}{{\mathbb Z}}
\newcommand{\N}{{\mathbb N}}
\newcommand{\R}{{\mathbb R}}
\renewcommand{\aa}{\ensuremath{\mathfrak{A}}\xspace}
\newcommand{\V}{\ensuremath{\mathcal{V}}\xspace}
\def\mh{\mathfrak{h}}
\def\sl{\mathfrak{sl}}
\def\l{\lambda}
\def\End{\text{End}}
\def\aa{\mathtt{a}}
\def\d{\mathtt{d}}
\def\g{\mathtt{g}}
\def\V{\mathcal{V}}
\def\G{\mathcal{G}}
\def\A{\mathcal{A}}
\def\B{\mathcal{B}}
\begin{document}

\title{Generalized Polynomial modules over the Virasoro algebra}


\author{Genqiang Liu}
\address{School of Mathematics and Statistics, Henan University, Kaifeng 475004, China. }
\curraddr{}
\email{liugenqiang@amss.ac.cn}
\thanks{}

\author{Yueqiang Zhao}
\address{School of Mathematics and Statistics, Henan University, Kaifeng 475004, China.}
\curraddr{}
\email{yueqiangzhao@163.com}
\thanks{}

\subjclass[2010]{Primary 17B10, 17B65, 17B66, 17B68}

\date{}

\dedicatory{}

\commby{Kailash C. Misra}

\begin{abstract} Let $\mathcal{B}_r$ be the $(r+1)$-dimensional quotient Lie algebra of
 the positive part of the Virasoro algebra $\V$. Irreducible $\B_r$-modules were used to construct irreducible Whittaker modules in \cite{MZ2}
 and  irreducible weight modules with
 infinite dimensional weight spaces over $\V$ in \cite{LLZ}.
 In the present paper, we construct  non-weight Virasoro modules $F(M, \Omega(\l,\beta))$ from
 irreducible $\B_r$-modules $M$ and $(\A,\V)$-modules $\Omega(\l,\beta)$.
We give necessary and sufficient conditions for the  Virasoro module $F(M, \Omega(\l,\beta))$ to
be irreducible. Using the weighting functor introduced by J. Nilsson, we also give
the isomorphism criterion for two $F(M, \Omega(\l,\beta))$.
\end{abstract}

\maketitle

\vskip 10pt

\section{Introduction}
The Virasoro algebra $\V$ is the algebra named after the physicist Miguel Angel Virasoro.
 It is a complex Lie algebra, given as the universal central extension  
of the derivation Lie algebra of the Laurent polynomial algebra  $\A=\C[x^{\pm 1}]$, see \cite{KR} and references therein.
More precisely,  $\V$  is the complex Lie algebra spanned by $\{\mathtt{c}, \d_i:i\in\mathbb{Z}\}$
with the following relations:
\begin{displaymath}
[\d_i,\d_j]=(j-i) \d_{i+j}+\delta_{i,-j}\frac{i^3-i}{12}\mathtt{c};\quad
[\d_i,\mathtt{c}]=0,  \,\forall\, i,j\in\mathbb{Z} .
\end{displaymath}

The Virasoro algebra $\V$ is  an
important algebra, which is widely used in conformal field theory and string theory.
The representation theory on the Virasoro algebra has  attracted a
lot of attentions from mathematicians and physicists, see \cite{CDK,FMS,GO,KR,K}. 
The classical part of the weight representation
theory of $\V$ is well developed, we can refer to the  recent monograph
\cite{IK} for a detailed survey. In \cite{Mt}, Mathieu showed that
 any simple Harish-Chandra $\V$ is a highest/lowest weight module  or a
 intermediate series module.
In \cite{MZ1}, it is further  shown that any
simple weight $\V$-module with a nonzero finite dimensional weight
space is a Harish-Chandra module. 

The study of  simple
  $\V$-modules which are not Harish-Chandra modules is also very popular. One of the earliest
weight modules with infinite dimensional weight spaces
 is the tensor products of some
highest weight module and some intermediate series module, see 
\cite{Zh}. The irreducibility of these tensor products
was  solved completely by Chen-Guo-Zhao, see \cite{CGZ}. In 2001, Conley and
Martin \cite{CM} defined another class of such modules. The irreducibility of these modules was determined in
\cite{LZ}.  In \cite{LLZ}, a large class of irreducible Virasoro module with
 infinite dimensional weight spaces
 were constructed from $\B_r$-modules, where $\B_r$ (see section 2) is an $(r+1)$-dimensional quotient Lie algebra
of  the positive part of the Virasoro algebra.

Non-weight
 Virasoro modules were also  studied by many researchers. These modules include Whittaker modules, see \cite{OW,LGZ,LZ2,FJK,Ya,MW,OW2, MZ2},
$\C[\d_0]$-free modules, see
\cite{N1,TZ1,TZ2}, highest-weight-like modules, see \cite{GLZ}, irreducible modules from Weyl modules, see \cite{LZ2}.
Whittaker modules and  more general modules were
described in a uniform way in \cite{MZ2}, using irreducible $\B_r$-modules.
In \cite{TZ2}, it was showed that if $M$ is a $\V$-module which is a free $\C[\d_0]$-module  of rank $1$,
 then $M\cong \Omega(\lambda,\beta)$ for some $\beta\in \C$ and $ \lambda\in \C\setminus\{0\}$. One can refer to Example \ref{E2} for the definition
 of $\Omega(\lambda,\beta)$.

It is easy to see that an irreducible $\V$-module is not a weight module if and only if it is $\C[\d_0]$-free.  So
 it is valuable to construct non-weight $\V$-modules from $\Omega(\lambda,\beta)$. This is the  motivation of the  present paper.

Now we briefly
describe the  main content of the present paper. In Sect.2, we first recall the definition of $(\A,\V)$-modules  from the reference \cite{D}.
The module $\Omega(\lambda,\beta)$ is a typical example of  an $(\A,\V)$-module. For any irreducible module $M$
over $\B_r$ and   an
$(\A,\V)$-module $W$,   we define a
Virasoro module structure on the vector space $F(M,
W)=M\otimes W$, see (\ref{def3}) and (\ref{GM}). These modules are generalization of Virasoro modules $\mathcal{N}(M, \a)$ defined in \cite{LLZ}.
 In Sect.3, we prove that the Virasoro module
$F(M,\Omega(\lambda,\beta))$ is reducible if and only if $M \cong M_\b$,
 a $1$-dimensional $\B_r$-module determined by the parameter $\b\in\C$, see Theorem \ref{mt}. Thus we
obtain a huge class of irreducible non-weight Virasoro modules.
 In \cite{N2}, Nilsson introduced a weighting functor which maps non-weight modules to weight modules.
 It is written in \cite{N2} that the idea of this functor is due to O. Mathieu. Using the weighting functor, we also
determine  necessary and sufficient conditions for two such
irreducible Virasoro modules $F(M,\Omega(\lambda,\beta))$ to be isomorphic, see Theorem \ref{IT}.

\section{Constructing new  Virasoro modules }

We denote by $\mathbb{Z}$, $\mathbb{Z}_+$, $\N$, $\R$ and
$\mathbb{C}$ the sets of  all integers, nonnegative integers,
positive integers, real numbers and complex numbers, respectively.

Firstly we recall the definition of an $(\A,\V)$-module which is a very nature notion,
 see Sect.3 in \cite{D}.

\begin{definition} An  $(\A,\V)$-module $M$ is  a  module both
for the Lie algebra $\mathcal{V}$  and the commutative associative algebra $\A$ with  compatible actions:
$$mx^{n+m}v=\d_n x^m v-x^m \d_nv,\,\  \mathtt{c}v=0,$$
where $m,n\in \Z, v\in M$.
\end{definition}

\

The following two examples are two interesting classes  of $(\A,\V)$-modules.

\

 \noindent{\bf{Example 1.}}{\label{E1} For each $\a,\b\in \C$, there is a natural $(\A,\V)$-module structure on $\A$ as follows:
$$\d_m x^n=(n+\a+\b m) x^{n+m}, \ x^m x^n=x^{n+m},\  \mathbf{c}x^n=0.$$ We denote this module by
$A(\a, \b)$ which is called the intermediate
series module. It is well known that, as a $\V$-module  $A(\a, \b)$ is reducible if and only if $\a\in\Z$, and $\b=0$ or $1$.
Mathieu showed that any irreducible uniformly bounded weight module over $\V$ is isomorphic to some
irreducible sub-quotient of $A(\a, \b)$, see \cite{Mt}.}

\

 \noindent{\bf{Example 2.}}\label{E2}  For $\lambda\in \C\setminus\{0\},\beta\in \C,$ denote by $\Omega(\lambda,\beta)=\C[t]$
the polynomial associative algebra over $\C$ in indeterminate $t$.
In \cite{LZ2}, a class of $(\A,\V)$-modules is defined  on
$\Omega(\lambda,\beta)$ by
 $$\mathtt{c}f(t)=0, \d_mf(t)=\lambda^m(t-\b m)f(t-m),$$
$$x^mf(t)=\l^m f(t-m),$$
 for all $m\in \Z, f(t)\in \C[t]$. From \cite{LZ2} we know that
$\Omega(\lambda,\beta)$ is irreducible when it was restricted to $\V$ if and only if $\beta\ne 0$. When $\b=0$, one can check that
$\C[t]t$ is a proper Virasoro submodule of $\Omega(\l, 0)$.
It was showed that if $M$ is a $\V$-module which is a free $\C[\d_0]$-module  of rank $1$,
 then $M\cong \Omega(\lambda,\beta)$ for some $\beta\in \C$ and $ \lambda\in \C\setminus\{0\}$, see \cite{TZ2}.

\

For an $(\A,\V)$-module $M$, we define the following operator on $M$
$$\g(m)=x^{-m}\d_m,\, m\in \Z.$$ We can check that
\begin{equation}\label{gr}\mathtt{g}(m)\mathtt{g}(k)v-\mathtt{g}(k)\mathtt{g}(m)v= -k\mathtt{g}(k)v+m\mathtt{g}(m)v+(k-m)\mathtt{g}(m+k)v,
\end{equation}
\begin{equation}\label{gr2}\g(m)x^nv-x^n\g(m)v=nx^nv,
\end{equation}
 for any $m, k,n\in \Z$, $v\in M$.

\

By abuse of language, we denote $\G$ by  the Lie algebra with the basis $\{\g(m):m\in\mathbb{Z}\}$
and the Lie bracket defined as follows:
\begin{equation}\label{LG}
[\mathtt{g}(m),\mathtt{g}(k)]=-k\mathtt{g}(k)+m\mathtt{g}(m)+(k-m)\mathtt{g}(m+k),\, \forall\, m,k\in \Z.
\end{equation}

Let us define the notion of $(\A,\G)$-modules which appears  naturally in the above construction.
\begin{definition} An  $(\A,\G)$-module $M$ is  a  module both
for the Lie algebra $\G$  and the commutative associative algebra $\A$ with  compatible actions:
$$\g(m)x^nv-x^n\g(m)v=nx^nv,$$
where $m,n\in \Z, v\in M$.
\end{definition}

From (\ref{gr}) and (\ref{gr2}), any $(\A,\V)$-module can be viewed as an $(\A,\G)$-module.
Conversely, an $(\A,\G)$-module $M$ can be  also viewed an $(\A,\V)$-module via
$$\d_mv=x^m\g(m)v,\, \mathtt{c}v=0,\, \forall\, v\in M.$$

\

Let $M$ be a $\G$-module  and $W$ be an $(\A,\V)$-module. Since $W$ is also a
$\G$-module, considering the tensor product $M\otimes W$ of $\G$-modules $M$ and $W$,
 there is a natural $(\A,\G)$-module structure on
$M\otimes W$ as follows
\begin{equation}\label{def1} \g(m) (v\otimes w)=v\otimes(\g(m)w)+(\g(m)v) \otimes w,\end{equation}
\begin{equation}\label{def2}x^m (v\otimes w)= v\otimes (x^mw),\end{equation}
where $m\in\Z, v\in M, w\in W$.

\

From $\d_m=x^m\g(m)$, we know that the action of $\V$ on $M\otimes W$ is
\begin{equation}\label{def3} \d_m (v\otimes w)=v\otimes(d_mw)+(\g(m)v) \otimes x^mw,\end{equation}
\begin{equation}\label{def4}\mathtt{c}(v\otimes w)=0,\end{equation}
where $m\in\Z, v\in M, w\in W$.

\

Consequently, the formulas  (\ref{def2}),(\ref{def3}) and (\ref{def4}) define an  $(\A,\V)$-module structure on $M\otimes W$.
We denote this $(\A,\V)$-module by $F(M, W)$.

\

Denote by $\V_+$ the Lie subalgebra of $\V$
spanned by all $\d_i$ with $i\geq 0$. For $r\in\mathbb{Z}_+$, denote
by $\V^{(r)}_+$ the Lie subalgebra of $\V$
generated by all $\d_i$, $i>r$, and by $ \B_r$ the quotient
algebra $\V_+/\V^{(r)}_+$. By $\bar \d_i$ we
denote the image of $\d_i$ in $ \B_r$. Note that  $ \B_r$
is a solvable Lie algebra of dimension $r+1$.

Let $M$ be a $\G$-module. Motivated by the notion of polynomial modules in \cite{BZ}, we suppose that the action of
 $\g(m)$ on $M$ is defined by
 $$\g(m)=\sum_{i=0}^r\frac{m^{i+1} \aa_i }{(i+1)!},$$ where $\aa_i\in \End_\C(M)$ for $i:0\leq i\leq r$.
From the Lie bracket (\ref{LG}) of $\G$, we can check that
$$[\aa_i,\aa_j]=(j-i)\aa_{i+j},\, \forall\, i,j:0\leq i, j\leq r,$$
where $\aa_{i+j}=0$, when $i+j> r$.
Thus $M$  can be viewed as a  module over the Lie algebra $\B_r$ via
$$\bar \d_i v=\aa_i v,\ \forall\, v\in M.$$

Conversely, for a module $M$ over $\B_r$, we define
the action of $\G$ on $M$ by
 \begin{equation}\label{GM}\mathtt{g}(m)v=\sum_{i=0}^r\frac{m^{i+1}\bar \d_i v}{(i+1)!},\, \forall\, v\in M.\end{equation}
\

\begin{lemma} Let $M$ be a module over $\B_r$. Then $M$ becomes a $\G$-module under the action (\ref{GM}).
\end{lemma}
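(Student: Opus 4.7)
The plan is to verify directly that the operators $\mathtt{g}(m)$ defined by (\ref{GM}) satisfy the Lie bracket relation (\ref{LG}). The key observation is that in $\mathcal{B}_r$ we have $[\bar{\mathtt d}_i, \bar{\mathtt d}_j] = (j-i)\bar{\mathtt d}_{i+j}$ with the convention $\bar{\mathtt d}_l = 0$ for $l > r$, so all sums that look like they should extend to indices larger than $r$ automatically truncate.

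First I would compute the left-hand side by bilinearity:
$$[\mathtt{g}(m),\mathtt{g}(k)]v \;=\; \sum_{i,j=0}^{r} \frac{m^{i+1} k^{j+1}}{(i+1)!\,(j+1)!}\,[\bar{\mathtt d}_i,\bar{\mathtt d}_j]v \;=\; \sum_{n=0}^{r}\Bigl(\sum_{i+j=n}\frac{(j-i)\,m^{i+1}k^{j+1}}{(i+1)!\,(j+1)!}\Bigr)\bar{\mathtt d}_n v,$$
and expand the right-hand side of (\ref{LG}) via (\ref{GM}) as
$$\bigl(-k\mathtt{g}(k)+m\mathtt{g}(m)+(k-m)\mathtt{g}(m+k)\bigr)v \;=\; \sum_{n=0}^{r}\frac{m^{n+2}-k^{n+2}+(k-m)(m+k)^{n+1}}{(n+1)!}\,\bar{\mathtt d}_n v.$$
Equating coefficients of $\bar{\mathtt d}_n v$ reduces everything to verifying, for each $n\geq 0$, the polynomial identity
$$(n+1)!\sum_{i+j=n}\frac{(j-i)\,m^{i+1}k^{j+1}}{(i+1)!\,(j+1)!} \;=\; m^{n+2}-k^{n+2}+(k-m)(m+k)^{n+1}.$$

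The main (and only nontrivial) step is this binomial identity. I would handle it by extracting the coefficient of $m^p k^{n+2-p}$ on both sides. On the left, after the substitution $p = i+1$, the coefficient becomes $\frac{(n+1)!(n+2-2p)}{p!\,(n+2-p)!} = \binom{n+1}{p}-\binom{n+1}{p-1}$. On the right, expanding $(k-m)(m+k)^{n+1}$ by the binomial theorem and isolating the extreme terms gives $k^{n+2}-m^{n+2}+\sum_{p=1}^{n+1}\bigl[\binom{n+1}{p}-\binom{n+1}{p-1}\bigr]m^p k^{n+2-p}$, which matches after the $m^{n+2}-k^{n+2}$ cancellation. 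Thus the two polynomials agree term-by-term.

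There is no real obstacle beyond this combinatorial bookkeeping: since $\bar{\mathtt d}_n$ is zero in $\mathcal{B}_r$ for $n>r$, any contribution from indices $i,j$ with $i+j>r$ vanishes on both sides automatically, so the identity in degrees $0\leq n\leq r$ is precisely what is needed. Together with the obvious antisymmetry, this shows $M$ is a $\mathcal{G}$-module under (\ref{GM}).
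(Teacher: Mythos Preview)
Your proof is correct and follows essentially the same approach as the paper: both verify the bracket relation (\ref{LG}) directly by expanding in terms of $\bar{\mathtt d}_n$ and reducing to a binomial identity. The paper carries out the computation as a single chain of reindexed sums culminating in the recognition of $(m+k)^{i+1}$, whereas you first isolate the coefficient of each $\bar{\mathtt d}_n v$ and then check the resulting polynomial identity in $m,k$ term-by-term; this is the same calculation organized slightly differently.
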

\begin{proof} We can compute that
$$\aligned &\ \mathtt{g}(m)\mathtt{g}(k)v-\mathtt{g}(k)\mathtt{g}(m)v -m\mathtt{g}(m)v+k\mathtt{g}(k)v\\
=&\ \Big(\sum_{i=0}^r\frac{m^{i+1}}{(i+1)!}\bar \d_i\Big)
\Big(\sum_{j=0}^r\frac{k^{j+1}}{(j+1)!}\bar \d_j\Big)v\\ &
-\Big(\sum_{i=0}^r\frac{k^{j+1}}{(j+1)!}\bar \d_j\Big)
\Big(\sum_{i=0}^r\frac{m^{i+1}}{(i+1)!}\bar
\d_i\Big)v\\
&+\Big(k\sum_{i=0}^r\frac{k^{i+1}}{(i+1)!}
-m\sum_{i=0}^r\frac{m^{i+1}}{(i+1)!}\Big)\bar \d_iv\\
=&\ \sum_{i,j=0}^r\frac{m^{i+1}k^{j+1}}{(i+1)!(j+1)!}(j-i)\bar
\d_{i+j}v+\Big(k\sum_{i=0}^r\frac{k^{i+1}}{(i+1)!}\\ &
-m\sum_{i=0}^r\frac{m^{i+1}}{(i+1)!}\Big)\bar \d_iv\\
=&\ \left(k\sum_{i,j=0}^r\frac{m^{i+1}k^j}{(i+1)!j!}\bar \d_{i+j}
-m\sum_{i,j=0}^r\frac{m^ik^{j+1}}{i!(j+1)!}\bar
\d_{i+j}\right)v\\
&\ +\Big(k\sum_{i=0}^r\frac{k^{i+1}}{(i+1)!}
-m\sum_{i=0}^r\frac{m^{i+1}}{(i+1)!}\Big)\bar \d_iv\\
=&\ k\sum_{i=0}^r\sum_{j=0}^i\frac{m^{i+1-j}k^j}{(i+1-j)!j!}\bar
\d_iv-m\sum_{i=0}^r\sum_{j=0}^i\frac{k^{i+1-j}m^j}{(i+1-j)!j!}\bar
\d_{i}v\\
&\ +\Big(k\sum_{i=0}^r\frac{k^{i+1}}{(i+1)!}
-m\sum_{i=0}^r\frac{m^{i+1}}{(i+1)!}\Big)\bar \d_iv\endaligned$$
 $$\aligned
=&\ k\sum_{i=0}^r\sum_{j=0}^{i+1}\frac{m^{i+1-j}k^j}{(i+1-j)!j!}\bar
\d_{i}
-m\sum_{i=0}^r\sum_{j=0}^{i+1}\frac{k^{i+1-j}m^j}{(i+1-j)!j!}\bar
\d_{i}v\\ =&\ (k-m)\sum_{i=0}^r\frac{(m+k)^{i+1}}{(i+1)!}\bar \d_{i}v=(k-m)\mathtt{g}(m+k)v.\endaligned
$$
 The lemma is proved.
\end{proof}

For a module $M$ over $\B_r$,  we can see that the Virasoro
 module ${F}(M, W)$ is a weight module if and only if $W$ is a weight Virasoro module.
 From (\ref{def3}) and (\ref{GM}), we can see that if $W=\oplus_{\l\in\C}W_\lambda$ is a weight module over $\V$,
 then ${F}(M, W)=\oplus_{\l\in \C}{N}_{\l}$ is a weight Virasoro module
with  weight spaces ${N}_{\l}=M\otimes W_\l$ where
$$ {N}_{\l}=\{v\in {F}(M,
W)\,\,|\,\,d_0v=\l v\}.$$

\begin{proposition}\label{ff}
If $M_1,M_2$ are modules over $\B_r$, then as $(\A, \V)$-modules,
$$F(M_1,F(M_2,W))\cong F(M_1\otimes M_2,W).$$
\end{proposition}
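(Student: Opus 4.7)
The plan is to exhibit the obvious vector space isomorphism
$$\phi: M_1\otimes(M_2\otimes W)\longrightarrow (M_1\otimes M_2)\otimes W,\quad v_1\otimes v_2\otimes w\mapsto (v_1\otimes v_2)\otimes w,$$
and verify that it intertwines both the $\A$-action and the $\V$-action on the two sides. Here one has to remember that on the right-hand side, $M_1\otimes M_2$ is regarded as a $\B_r$-module via the ordinary tensor-product (derivation) action $\bar\d_i(v_1\otimes v_2)=\bar\d_i v_1\otimes v_2+v_1\otimes\bar\d_i v_2$.

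First I would verify $\phi$ is an $\A$-homomorphism. By (\ref{def2}) both actions only touch the $W$-slot, so this is immediate:
$$\phi\bigl(x^m\cdot v_1\otimes(v_2\otimes w)\bigr)=\phi\bigl(v_1\otimes v_2\otimes x^mw\bigr)=(v_1\otimes v_2)\otimes x^mw=x^m\cdot\phi\bigl(v_1\otimes(v_2\otimes w)\bigr).$$
Next, the key observation is that formula (\ref{GM}), being linear in $\bar\d_i$, forces the induced $\G$-action on $M_1\otimes M_2$ to be a derivation:
$$\g(m)(v_1\otimes v_2)=\sum_{i=0}^r\frac{m^{i+1}}{(i+1)!}\bar\d_i(v_1\otimes v_2)=\g(m)v_1\otimes v_2+v_1\otimes\g(m)v_2.$$
This is the single technical point on which the whole statement rests.

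With this in hand I would compute both sides of $\d_m$. On $F(M_1,F(M_2,W))$, using (\ref{def3}) twice,
$$\d_m\bigl(v_1\otimes(v_2\otimes w)\bigr)=v_1\otimes v_2\otimes \d_m w+v_1\otimes\g(m)v_2\otimes x^m w+\g(m)v_1\otimes v_2\otimes x^m w.$$
On $F(M_1\otimes M_2,W)$, applying (\ref{def3}) once together with the derivation formula above,
$$\d_m\bigl((v_1\otimes v_2)\otimes w\bigr)=(v_1\otimes v_2)\otimes\d_m w+\bigl(\g(m)v_1\otimes v_2+v_1\otimes\g(m)v_2\bigr)\otimes x^m w.$$
These match under $\phi$. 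Since $\mathtt c$ kills everything in either module by (\ref{def4}), $\phi$ intertwines the full $(\A,\V)$-action.

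The main (and only mild) obstacle is the derivation identity for $\g(m)$ on $M_1\otimes M_2$; once that is recorded, the rest is a direct substitution. No additional structural fact is needed, so the proposition is essentially the associativity of $\otimes$ packaged with the fact that the construction $M\mapsto F(M,-)$ is functorially compatible with tensor products of $\B_r$-modules.
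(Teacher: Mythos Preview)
Your proof is correct and follows essentially the same route as the paper's: both verify that the obvious identification $v_1\otimes(v_2\otimes w)\leftrightarrow(v_1\otimes v_2)\otimes w$ intertwines the $x^m$- and $\d_m$-actions, with the crux being the derivation identity $\g(m)(v_1\otimes v_2)=\g(m)v_1\otimes v_2+v_1\otimes\g(m)v_2$. The only difference is that you spell this identity out explicitly, whereas the paper's proof uses it tacitly in passing from the expanded expression to $\big(\g(m)(v_1\otimes v_2)\big)\otimes x^m w$.
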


\begin{proof}  For any $v_1\in M_1, v_2\in M_2, w\in W, m,n\in \Z$, we have that
$$\aligned  & \d_m\Big(v_1\otimes(v_2\otimes w)\Big)\\
=&\ v_1\otimes \d_m(v_2\otimes w)+(\mathtt{g}(m)v_1)\otimes(v_2\otimes x^m w)\\
=&\ v_1\otimes (v_2\otimes \d_mw+\mathtt{g}(m)v_2\otimes x^mw)+(\mathtt{g}(m)v_1)\otimes(v_2\otimes x^m w)\\
=&\ (v_1\otimes v_2)\otimes \d_mw+\Big(\mathtt{g}(m)(v_1\otimes v_2)\Big)\otimes x^mw\\
=&\ \d_m\Big((v_1\otimes v_2)\otimes w\Big),
\endaligned$$
and
$$x^m\Big(v_1\otimes(v_2\otimes w)\Big)=\ v_1\otimes  v_2\otimes x^mw=x^m\Big((v_1\otimes v_2)\otimes w\Big).$$
The proposition is proved.
\end{proof}

\begin{remark} From Proposition \ref{ff}, it is very hard to determine the irreducibility of $F(M,W)$ for
any irreducible $(\A,\V)$-module $W$, since it is a difficult problem to discuss the irreducibility of the tensor
product of two $\B_r$-modules. Moreover, up to now there is no explicit classification of all irreducible
$(\A,\V)$-modules. However, it is still  valuable to research  the structure of $F(M,W)$ for some
interesting $(\A,\V)$-modules.
\end{remark}

Let $M$ be a module over $\B_r$. From Example 1, (\ref{def3}) and (\ref{GM}), the action of $\V$ on ${F}(M,A(\a,\beta))$ is defined by
\begin{equation}\label{AA} \d_m (v\otimes x^n)=\Big((n+\a+\b m)v+\sum_{i=0}^r\frac{m^{i+1}\bar \d_i v}{(i+1)!}\Big)\otimes x^{n+m},\end{equation}
where $m,n\in \Z, v\in M$. Clearly  ${F}(M,A(\a,\beta))$ is a weight module over $\V$,
which is isomorphic to a module $\mathcal{N}(M, \a)$ defined in \cite{LLZ} where $M$ is modified by the action of $\d_0$.
 Thus our Virasoro modules ${F}(M, W)$ are generalization of the modules $\mathcal{N}(M, \a)$ define in \cite{LLZ}.

If $M$ is a finite dimensional irreducible $\B_r$-module, then by Lie's Theorem,
$M=\C v$, $ \bar \d_i M=0$ for any $i\in\N$ and $\bar \d_0 v= \gamma v$ for some $\gamma\in\C$.
We denote this $\B_r$-module by $M_\gamma$. Clearly
$${F}(M_\gamma,A(\a,\beta))\cong A(\a,\beta+\gamma).$$

The following Theorem was given  by Liu-Lu-Zhao, see Theorem 4 and Theorem 5 in \cite{LLZ}.
\begin{theorem}\label{LLZ}The following statements hold.
\itemize\item[(a).] Let $M$ be an irreducible $\B_r$-module.
Then the Virasoro module $F(M, A(\a,\beta))$ is reducible if and only if $M\cong M_\gamma$ and $\a\in\Z, \b+\gamma=0$ or $1$ .
\item[(b).] Let  $M, M'$ be infinite dimensional irreducible module over $\B_r$, and $\a, \a'$, $\b,\b'\in \C$.
Then $F(M,A(\a,\beta))\cong F(M',A(\a',\beta'))$ if and only if $M\cong M'$, $\a-\a'\in \Z$ and
$\b=\b'$.
\end{theorem}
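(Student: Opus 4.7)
For part (a), $(\Leftarrow)$ is immediate: when $M \cong M_\gamma$ the operators $\bar\d_i$ with $i\geq 1$ act as zero and $\bar\d_0$ acts by $\gamma$, so formula (\ref{AA}) collapses to $\d_m(v\otimes x^n) = (n + \a + (\b+\gamma)m)(v\otimes x^{n+m})$, giving $F(M_\gamma, A(\a,\b)) \cong A(\a, \b+\gamma)$; reducibility of this intermediate series module under the stated conditions is the classical fact recalled in Example \ref{E1}. For $(\Rightarrow)$, my plan is to take a nonzero proper submodule $N \subseteq F(M, A(\a,\b))$ and first extract a homogeneous element $v\otimes x^n \in N$ via a Vandermonde-type argument exploiting the polynomial dependence of $\d_m$ on $m$ in (\ref{AA}). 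Setting $K_k := \{w \in M : w\otimes x^k \in N\}$, the formula then gives
\[
 P_v(m) := (n + \a + \b m)v + \sum_{i=0}^{r} \frac{m^{i+1}}{(i+1)!}\bar\d_i v \;\in\; K_{n+m},\qquad \forall\, v\in K_n,\ m\in\Z.
\]

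The main obstacle I anticipate is that, while $P_v$ is a polynomial in $m$, its values lie in \emph{different} weight spaces $K_{n+m}$, so finite-differences in $m$ do not land inside a single $K_k$ from which one could isolate the $\bar\d_i v$. I plan to circumvent this by iterating: lift each $P_v(m) \in K_{n+m}$ by a further application of $\d_{m'}$ and intersect the resulting two-parameter family of polynomial constraints at a single fixed weight. The operator equations obtained should force $\sum_n K_n$ to be a nonzero $\B_r$-stable subspace of $M$, hence equal to $M$ by irreducibility. Properness of $N$ then forces $P_v(m)$ to vanish identically in $m$ for some nonzero $v$; reading off its coefficients yields $\a = -n \in \Z$, $\bar\d_0 v = -\b v$ and $\bar\d_i v = 0$ for $i\geq 1$, so $\C v$ is a nonzero $\B_r$-submodule of $M$. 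Irreducibility collapses $M$ to $M_{-\b}$ and gives $\b+\gamma = 0$; the case $\b+\gamma = 1$ follows by the dual argument applied to the quotient $F(M, A(\a,\b))/N$.

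For part (b), $(\Leftarrow)$ is again an explicit construction: a $\B_r$-isomorphism $M \cong M'$ tensored with the shift $A(\a,\b)\cong A(\a',\b)$ arising from $\a-\a'\in\Z$ gives the required map. For $(\Rightarrow)$, given $\Phi : F(M, A(\a,\b))\to F(M', A(\a',\b'))$, I plan to analyze the $\d_0$-action, which on $M\otimes x^n$ is $(n+\a)\,\mathrm{id} + \bar\d_0$: comparing generalized $\d_0$-spectra on source and target forces $\a-\a'\in\Z$, allowing a reduction to $\a = \a'$. The intertwining identity $\Phi\circ\d_m = \d_m\circ\Phi$ applied to a homogeneous element then becomes a polynomial equation in $m$ whose constant term forces $\Phi$ to preserve the $\Z$-grading and whose linear-in-$m$ coefficient isolates $\b-\b'$. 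The remaining coefficients should deliver the $\B_r$-equivariant isomorphism $M\cong M'$. The infinite-dimensional hypothesis is essential at this last step, to prevent the spectral comparisons from degenerating in the manner that occurs for $M\cong M_\gamma$, a case already separated off by part~(a).
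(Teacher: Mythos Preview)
The paper does not contain a proof of this theorem: it is quoted verbatim from \cite{LLZ} (Theorems~4 and~5 there), as the sentence immediately preceding the statement makes explicit. So there is no in-paper argument to compare your proposal against.

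That said, let me comment on the proposal itself, since your strategy is close in spirit to the argument the present paper gives for its own Theorem~\ref{mt}. Two points deserve attention.

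First, the extraction of a homogeneous element requires no Vandermonde trick: because $\mathtt{g}(0)=0$, the operator $\d_0$ acts on $M\otimes x^n$ simply as multiplication by $n+\a$, so $F(M,A(\a,\b))$ is already a genuine weight module with weight spaces $M\otimes x^n$, and any submodule $N$ decomposes as $\bigoplus_n K_n\otimes x^n$ automatically.

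Second, and more seriously, the implication ``$\sum_n K_n = M$ plus properness of $N$ forces $P_v(m)$ to vanish identically in $m$'' is not justified. Knowing only that $\sum_n K_n$ is $\B_r$-stable does not control the individual $K_n$. What one actually needs (and what the iteration you describe, done as in the proof of Theorem~\ref{mt} via $\d_k\d_{m-k}$ with $m$ fixed and $k$ varying, does deliver) is that \emph{each} $K_n$ is a $\B_r$-submodule, hence $0$ or $M$. Even then, a single transition $K_n=M$, $K_{n+m_0}=0$ gives $P_v(m_0)=0$ for one value of $m_0$, not for all $m$; to conclude $P_v\equiv 0$ you must first argue that once $K_n=M$ the set $\{m:K_{n+m}=0\}$ is infinite, or else run a direct surjectivity argument on the operator $(n+\a+\b m)\mathrm{id}+\sum_i \frac{m^{i+1}}{(i+1)!}\bar\d_i$. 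Your ``dual argument'' for the case $\b+\gamma=1$ is likewise only gestured at. The overall architecture is correct, but these are genuine gaps rather than routine omissions.
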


\section{Irreducibility  and the isomorphism classes of $F(M,\Omega(\lambda,\beta))$ }

In this section, we will determine the irreducibility and the isomorphism
classes of the Virasoro modules $F(M,\Omega(\lambda,\beta))$ for any irreducible
$\B_r$-module $M$.

\subsection{Irreducibility of $F(M,\Omega(\lambda,\beta))$}

From (\ref{def3}) and (\ref{GM}), the $\V$-module structure on $F(M,\Omega(\lambda,\beta))$ is given by
 \begin{equation}\label{OA}\d_m (v\otimes f(t))=v\otimes \l^m(t-m\b)f(t-m)+\sum_{i=0}^r \frac{m^{i+1}\bar \d_iv}{(i+1)!} \otimes \l^mf(t-m).\end{equation}

 \begin{lemma}\label{trivial}Let $M$ be an irreducible module over $\B_r$. Then
 either $\bar \d_rM=0$ or the action of $\bar \d_r$ on $M$ is
bijective.\end{lemma}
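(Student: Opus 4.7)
The plan is to exhibit $\ker(\bar\d_r)$ and $\operatorname{im}(\bar\d_r)$ as $\B_r$-submodules of $M$ and then invoke irreducibility. The dichotomy in the statement then falls out immediately: if $\operatorname{im}(\bar\d_r) = 0$ we are in the first case, otherwise $\operatorname{im}(\bar\d_r) = M$ (surjectivity) and $\ker(\bar\d_r) \ne M$ forces $\ker(\bar\d_r) = 0$ (injectivity), so $\bar\d_r$ is bijective.

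The engine is a very short bracket calculation in $\B_r$. Because $\bar\d_k = 0$ in $\B_r$ for $k > r$, the relation $[\d_i, \d_j] = (j-i)\d_{i+j}$ collapses on the row through $\bar\d_r$ to
\begin{equation*}
[\bar\d_i, \bar\d_r] = (r-i)\bar\d_{i+r} =
\begin{cases}
r\,\bar\d_r, & i = 0,\\
0, & 1 \le i \le r.
\end{cases}
\end{equation*}
In other words, $\C\bar\d_r$ is an abelian ideal of $\B_r$ on which $\bar\d_0$ acts by the scalar $r$ and every $\bar\d_i$ with $i \ge 1$ acts trivially.

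Using this, stability of $\ker(\bar\d_r)$ is immediate: for $v$ with $\bar\d_r v = 0$,
\begin{equation*}
\bar\d_r(\bar\d_i v) = \bar\d_i(\bar\d_r v) + [\bar\d_r,\bar\d_i]v = 0 + (i-r)\bar\d_{i+r}v,
\end{equation*}
which vanishes for $i \ge 1$ (since $\bar\d_{i+r} = 0$) and for $i = 0$ equals $-r\bar\d_r v = 0$. Stability of $\operatorname{im}(\bar\d_r)$ is the dual computation: for any $v \in M$,
\begin{equation*}
\bar\d_i(\bar\d_r v) = \bar\d_r(\bar\d_i v) + (r-i)\bar\d_{i+r}v,
\end{equation*}
which again lies in $\operatorname{im}(\bar\d_r)$ (the second term vanishes for $i \ge 1$, and for $i = 0$ the identity becomes $\bar\d_0\bar\d_r v = \bar\d_r(\bar\d_0 v + rv)$).

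There really is no obstacle here; the only thing to watch is that one cannot just say ``$\bar\d_r$ is central'' — it is not, as $[\bar\d_0,\bar\d_r] = r\bar\d_r$ — but it spans a one-dimensional ideal, and that is exactly what is needed for both the kernel and the image to be submodules.
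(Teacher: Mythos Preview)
Your proof is correct and follows exactly the same approach as the paper: the paper simply asserts that $\bar\d_r M$ and ${\rm ann}_M(\bar\d_r)$ are submodules and then invokes simplicity, while you supply the explicit bracket computation (using $[\bar\d_i,\bar\d_r]=(r-i)\bar\d_{i+r}$) that the paper leaves as ``straightforward to check.''
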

\begin{proof}It is straightforward to check that $\bar \d_rM$ and
${\rm ann}_M(\bar \d_r)=\{v\in M|\bar \d_r v=0\}$ are submodules of
$V$. Then the lemma follows from the simplicity of $M$.
\end{proof}

For any $n\in\Z_+, m\in \Z$, let $h_m^0=1$ and $$h_m^n= \prod_{j=m+1}^{m+n}(t-j),\forall\ n>0.$$

It is clear that  $\{h_m^n\mid n\in \Z_+\}$ forms a
 basis of $\Omega(\lambda,\b)$ for any $m\in \Z$. Moreover
 \begin{equation}\label{HD}h^n_{m}-h^n_{m+1}=nh^{n-1}_{m+1}.\end{equation}
From the definition of $\Omega(\lambda,\beta)$, we have that
\begin{equation}\label{H}\d_mh^n_k=\l^m(t-m\beta)h^n_{k+m},\ \ x^mh^n_k=\l^mh^n_{k+m},\end{equation}
for any $m,k\in\Z, n\in\Z_+$.

\begin{theorem}\label{mt} Let  $M$ be an irreducible module over
$\B_r$, $\l\in \C\setminus \{0\}$.
 Then the Virasoro module
$F(M,\Omega(\lambda,\beta))$ is reducible if and only if $M\cong M_\b$.
\end{theorem}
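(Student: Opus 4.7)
The plan is to prove both directions; the $\Leftarrow$ direction is a direct computation, while the $\Rightarrow$ direction uses a polynomial-in-$m$ extraction argument.

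For the sufficient direction, suppose $M\cong M_\beta=\C v$. Then only $\bar\d_0 v=\beta v$ contributes in formula (\ref{OA}), cancelling the $-m\beta$ inside $(t-m\beta)$, so $\d_m(v\otimes f(t))=\lambda^m v\otimes tf(t-m)$. Hence $\C v\otimes t\C[t]$ is a proper nonzero Virasoro submodule; equivalently $F(M_\beta,\Omega(\lambda,\beta))\cong\Omega(\lambda,0)$, which is reducible by Example \ref{E2}.

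For the necessary direction, let $N$ be a nonzero submodule of $F(M,\Omega(\lambda,\beta))$ and assume $M\not\cong M_\beta$; I show $N=F(M,\Omega(\lambda,\beta))$. Pick $0\neq w\in N$ and write $w=\sum_{n=0}^{d}v_n\otimes t^n$ with $v_d\neq 0$. The central observation is that
$$\lambda^{-m}\d_m w=\sum_{n=0}^d v_n\otimes(t-m\beta)(t-m)^n+\sum_{n=0}^d \g(m)v_n\otimes(t-m)^n$$
is a polynomial in $m$ with values in $F(M,\Omega(\lambda,\beta))$. Since any finite-difference $\Delta^D$ in $m$ is a finite $\C$-linear combination of the $\lambda^{-j}\d_j w$ and thus remains in $N$, every coefficient of a power of $m$ in $\lambda^{-m}\d_m w$ belongs to $N$. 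Set $s=\max\{i\geq 0:\bar\d_i M\neq 0\}$. If $s\geq 1$, then $\bar\d_s$ acts bijectively on $M$ (the kernel and image of $\bar\d_s$ are $\B_r$-submodules because $[\bar\d_i,\bar\d_s]=(s-i)\bar\d_{s+i}$ vanishes on $M$ for $i\geq 1$ by maximality of $s$), so $\bar\d_s v_d\neq 0$ and the coefficient of $m^{d+s+1}$ is $\frac{(-1)^d}{(s+1)!}\bar\d_s v_d\otimes 1\neq 0$. Otherwise $\bar\d_i M=0$ for all $i\geq 1$, irreducibility of $M$ forces $M\cong M_\gamma$ for some $\gamma\in\C$, and $M\not\cong M_\beta$ gives $\gamma\neq\beta$; the coefficient of $m^{d+1}$ is then $(-1)^d(\gamma-\beta)v_d\otimes 1\neq 0$. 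In either case, $N$ contains a nonzero element of the form $u\otimes 1$.

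To finish, I show that $u\otimes 1$ generates $F(M,\Omega(\lambda,\beta))$. Because $\g(0)=0$, formula (\ref{OA}) yields $\d_0(u\otimes g(t))=u\otimes tg(t)$, so iterating gives $u\otimes\C[t]\subseteq N$. Then $\d_m(u\otimes 1)-\lambda^m u\otimes(t-m\beta)=\lambda^m\g(m)u\otimes 1$ lies in $N$ for every $m\in\Z$; since $\g(m)u=\sum_{i=0}^r\frac{m^{i+1}}{(i+1)!}\bar\d_i u$ is a polynomial of degree $\leq r+1$ in $m$, Lagrange interpolation extracts $\bar\d_i u\otimes 1\in N$ for each $i=0,\dots,r$. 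Iterating produces $U(\B_r)u\otimes 1\subseteq N$; irreducibility of $M$ promotes this to $M\otimes 1\subseteq N$, and further applications of $\d_0$ give $M\otimes\C[t]=N$. The main obstacle is the polynomial-in-$m$ analysis in the previous paragraph: isolating the correct top-degree coefficient and proving it is nonzero precisely when $M\not\cong M_\beta$, which hinges on the bijectivity of the top nonzero $\bar\d_s$ (an extension of Lemma \ref{trivial}) and a careful tracking of the two summands.
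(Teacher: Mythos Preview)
Your proof is correct and follows the same overall strategy as the paper's: from an arbitrary nonzero element of a submodule $N$, extract via a polynomial-in-$m$ argument an element of the form $u\otimes 1$, then propagate using $\d_0$ and the irreducibility of $M$ to get $N=M\otimes\C[t]$. The implementations differ in two minor respects. First, the paper applies the composite $\d_k\d_{m-k}$ and reads off the coefficient of $k^{2r+2}$ (working in the shifted basis $h^n_m$) to reach $\sum_n(\bar\d_r^2 v_n)\otimes h^n_m\in N$, and then a second extraction in $m$ isolates $(\bar\d_r^2 v_l)\otimes 1$; you apply a single $\d_m$ in the monomial basis $t^n$ and extract the top coefficient $\bar\d_s v_d\otimes 1$ in one step. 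Second, the paper disposes of the finite-dimensional case $M\cong M_\gamma$ separately at the outset via the isomorphism $F(M_\gamma,\Omega(\lambda,\beta))\cong\Omega(\lambda,\beta-\gamma)$, whereas you fold it into the same extraction (your ``otherwise'' branch, where the $m^{d+1}$ coefficient is $(-1)^d(\gamma-\beta)v_d\otimes 1$). Your single-operator extraction is a modest simplification; the paper's two-step version buys nothing extra beyond making the interaction with the formulas for $h^n_m$ slightly tidier. One small remark: in your injectivity argument for $\bar\d_s$ you only mention $i\geq 1$, but the $i=0$ case (where $[\bar\d_0,\bar\d_s]=s\bar\d_s$) also needs to be checked and goes through immediately; this is exactly the content of the paper's Lemma~\ref{trivial}.
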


\begin{proof}
If $M$ is finite dimensional, then $M\cong M_\gamma$ for some $\gamma\in \C$. In this case, we have that
$$\d_m(v\otimes f(t))=v\otimes (t-m(\b-\gamma))f(t-m),$$ for any $f(t)\in\Omega(\lambda,\beta)$. Hence
$$F(M_\gamma,\Omega(\lambda,\beta))\cong \Omega(\lambda,\beta-\gamma).$$
Thus $F(M_\gamma,\Omega(\lambda,\beta))$
is reducible if and only if $\b-\gamma = 0$, i.e., $\b=\gamma$.

\

Next we assume that $M$ is infinite dimensional. By Lemma \ref{trivial}, there exists an
 $r_1\in \N$ with $r_1\leq r$ such that the action of $\bar \d_{r_1}$ on $M$ is
injective and $\bar \d_{i}M=0$ for $i>r_1$. We may simply assume that $r_1=r$.

 Let $N$ be a nonzero submodule of $F(M,\Omega(\lambda,\beta))$.
 Since $$F(M,\Omega(\lambda,\beta))\cong F(M,\Omega(1,\beta)),$$ see Theorem
 \ref{IT} below,
we  may further assume that $\l=1$.

\

\noindent{\bf Claim 1.} If $u=\sum_{n=0}^lv_n\otimes h^n_0$ is a nonzero element
in $N$ with $v_l\neq 0$, then $\sum_{n=0}^l (\bar \d_{r}^2 v_n)\otimes h^n_m\in N$ for any $m\in \Z$ .

By (\ref{OA}) and (\ref{H}),
$$ \d_m (v_n\otimes h^n_0)=v_n\otimes(t-m\b)h^n_m+ (\mathtt{g}(m)v_n) \otimes h^n_m.$$
Consequently
$$\aligned   &\d_k\d_{m-k}(v_n\otimes h^n_0)\\
=&\ \d_k\left( v_n\otimes (t-(m-k)\b)h^n_{m-k}+\mathtt{g}(m-k)v_n\otimes h^n_{m-k} \right)\\
=&\ v_n\otimes (t-k\b)(t-(m-k)\b-k)h^n_{m}+\mathtt{g}(m-k)v_n\otimes(t-k\b) h^n_{m} \\
&\ + \mathtt{g}(k)v_n\otimes (t-(m-k)\b-k)h^n_{m}+\mathtt{g}(k)\mathtt{g}(m-k)v_n\otimes h^n_{m}
\endaligned$$

Since $k$ is an arbitrary integer, considering the coefficient of $k^{2r+2}$ in $\d_k\d_{m-k}u$,
 we obtain that $\sum_{n=0}^l (\bar \d_{r}^2 v_n)\otimes h^n_m\in N$ for $m\in \Z$. Then Claim 1 follows.

 \

\noindent{\bf Claim 2.} If $u=\sum_{n=0}^lv_n\otimes h^n_0$ is a nonzero element
in $N$, then $(\bar \d_{r}^2 v_l)\otimes 1\in N$. Consequently
 $(\bar \d_{r}^2 v_l)\otimes \Omega(\l,\b)\subset N$.

\

Note that $h^k_m\in \C[t]$ can be expressed as a polynomial in $m$ with degree $k$,
and the highest term is a constant (i.e., a degree $0$ polynomial in $t$).
From Claim 1, $$u(m):=\sum_{n=0}^l (\bar \d_{r}^2 v_n)\otimes h^n_m\in N,$$ for any $m\in \Z$.
Considering the coefficient of $m^l$ in $u(m)$, we know that $(\bar \d_{r}^2 v_l)\otimes 1\in N$.
 From $\d_0((\bar \d_{r}^2 v_l)\otimes 1)=(\bar \d_{r}^2 v_l)\otimes t$,
we have that  $(\bar \d_{r}^2 v_l)\otimes \Omega(\l,\b)\subset N$. Then Claim 2 follows.

\

Since the action of $\bar \d_r$ on $M$ is
injective, $(\bar \d_{r}^2 v_l)\neq 0$.
From Claim 2, there exists nonzero element $v\in M$ such that $v\otimes \Omega(\l,\b)\subset N$.

\

\noindent{\bf Claim 3.} If $v\otimes \Omega(\l)\subset N$,
 then $(\bar\d_i v)\otimes \Omega(\l)\subset N$ for any $i\in \Z_+$.

From $\d_m(v\otimes h^n_{k-m})=v\otimes (t-m\b)h^n_k+\Big(\sum_{i=0}^r \frac{m^{i+1}\bar \d_i v}{(i+1)!}\Big)\otimes h^n_k$,
we see that $\Big(\sum_{i=0}^r \frac{m^{i+1}\bar \d_i v}{(i+1)!}\Big)\otimes h^n_k\in N$, for any $k,m\in \Z, n\in\Z_+$.
Hence $(\bar \d_i v)\otimes h^n_k\in N$ for any $i=0,1,\dots,r$.

\

Since $M$ is an irreducible $\B_r$-module, from Claim 3, we obtain that $M\otimes \Omega(\l)= N$. Therefore
$F(M,\Omega(\lambda,\beta))$ is irreducible when $M$ is infinite dimensional.
\end{proof}

\subsection{Isomorphism of $F(V,\Omega(\lambda,\beta))$}

We will first recall the weighting functor
introduced in \cite{N2}.

For $a\in\C$,  let $I_a$ be the maximal ideal of  $\C[\d_0]$ generated by $\d_0-a$.
For a $\V$-module $M$ and  $n\in\Z$, let
$$M_{n}:= M/I_{n}M,\ \ \  \mathfrak{W}(M):=\oplus_{n\in\Z}( M_{n}\otimes x^n).$$
By Proposition 8 in \cite{N2}, we have the following construction.
\begin{proposition} The vector space $\mathfrak{W}(M)$  becomes a weight $\V$-module   under the following action:
\begin{equation}\label{3.3}\d_m\cdot((v+I_{n}M)\otimes x^n):= (\d_mv+I_{n+m}M)\otimes x^{n+m}.\end{equation}
\end{proposition}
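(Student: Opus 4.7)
The plan is to verify three things about the formula $\d_m\cdot((v+I_n M)\otimes x^n):=(\d_m v+I_{n+m}M)\otimes x^{n+m}$: well-definedness, preservation of the Lie bracket of $\V$, and the weight property. One should also extend the action to the center by the obvious prescription $\cc\cdot((v+I_n M)\otimes x^n):=(\cc v+I_n M)\otimes x^n$, which is automatically well-defined since $\cc$ is central.

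For well-definedness, I must show $\d_m I_n M\subseteq I_{n+m}M$. Writing $I_n=(\d_0-n)\C[\d_0]$, a typical element of $I_n M$ has the form $p(\d_0)(\d_0-n)v$. The crucial computation is $\d_m\d_0=(\d_0-m)\d_m$, which is merely $[\d_m,\d_0]=-m\d_m$ rearranged; iterating over powers of $\d_0$ gives the commutation identity $\d_m\,p(\d_0)=p(\d_0-m)\d_m$ for every polynomial $p$. Hence
$$\d_m\cdot p(\d_0)(\d_0-n)v=p(\d_0-m)\bigl(\d_0-(n+m)\bigr)\d_m v\in I_{n+m}M,$$
so the class of $\d_m v$ modulo $I_{n+m}M$ depends only on the class of $v$ modulo $I_n M$.

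Next, to verify that this defines a Lie algebra action, I compose two operators directly. The definition gives $\d_m\d_k\cdot((v+I_n M)\otimes x^n)=(\d_m\d_k v+I_{n+m+k}M)\otimes x^{n+m+k}$, so the commutator acts as $([\d_m,\d_k]v+I_{n+m+k}M)\otimes x^{n+m+k}$. Since the Virasoro relation $[\d_m,\d_k]=(k-m)\d_{m+k}+\delta_{m,-k}\tfrac{m^3-m}{12}\cc$ already holds on $M$, it descends to each quotient $M_{n+m+k}$, which is exactly what the right-hand side $\bigl((k-m)\d_{m+k}+\delta_{m,-k}\tfrac{m^3-m}{12}\cc\bigr)\cdot((v+I_n M)\otimes x^n)$ computes.

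Finally, for the weight property, $\d_0 v=(\d_0-n)v+nv$ and the first summand lies in $I_n M$, so $\d_0$ acts on $M_n\otimes x^n$ as multiplication by $n$. Consequently $\mathfrak{W}(M)=\oplus_{n\in\Z}(M_n\otimes x^n)$ is the weight space decomposition and the weight set lies in $\Z$. I do not anticipate a serious obstacle; the only subtle point is recognizing that the commutation identity $\d_m\,p(\d_0)=p(\d_0-m)\d_m$ carries $I_n M$ into $I_{n+m}M$ in precisely the way needed to match the shift by $m$ in the $x^n$ factor, after which the remaining verifications are formal.
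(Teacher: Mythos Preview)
Your argument is correct: the commutation identity $\d_m\,p(\d_0)=p(\d_0-m)\d_m$ gives well-definedness, and the bracket and weight checks are routine. The paper does not supply its own proof of this proposition; it simply cites Proposition~8 of \cite{N2}, so your direct verification is in fact more than what the paper provides.
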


We first establish the following useful lemma.

\begin{lemma} We have $\mathfrak{W}(\Omega(\l,\b))\cong A(0,1-\b)$.
\end{lemma}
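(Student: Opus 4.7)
The plan is to unwind the definitions of both $\mathfrak{W}(\Omega(\l,\b))$ and $A(0,1-\b)$ and exhibit an explicit $\V$-module isomorphism between them. The key observation is that on $\Omega(\l,\b)=\C[t]$, the action of $\d_0$ is just multiplication by $t$ (take $m=0$ in the defining formula, giving $\d_0 f(t)=t f(t)$). Hence the ideal $I_n\Omega(\l,\b)$ coincides with $(t-n)\C[t]$, and so $\Omega(\l,\b)_n=\C[t]/(t-n)\C[t]$ is one-dimensional, spanned by the image $e_n$ of the constant polynomial $1$; under the quotient map any $f(t)\in\C[t]$ is sent to $f(n)e_n$.

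Next, I would compute how $\d_m$ acts on the basis $\{e_n\otimes x^n\}_{n\in\Z}$ of $\mathfrak{W}(\Omega(\l,\b))$. By the definition of the weighting functor,
\[
\d_m\cdot(e_n\otimes x^n)=(\d_m\cdot 1 + I_{n+m}\Omega(\l,\b))\otimes x^{n+m}.
\]
Since $\d_m\cdot 1=\l^m(t-\b m)$, reduction modulo $(t-(n+m))$ replaces $t$ by $n+m$, giving
\[
\d_m\cdot(e_n\otimes x^n)=\l^m\bigl(n+(1-\b)m\bigr)\,e_{n+m}\otimes x^{n+m}.
\]

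Then I would define the linear map $\phi\colon \mathfrak{W}(\Omega(\l,\b))\to A(0,1-\b)$ by $\phi(e_n\otimes x^n)=\l^{-n}x^n$ and check that it intertwines the $\V$-actions. On one side,
\[
\phi\bigl(\d_m\cdot(e_n\otimes x^n)\bigr)=\l^m\bigl(n+(1-\b)m\bigr)\l^{-(n+m)}x^{n+m}=\l^{-n}\bigl(n+(1-\b)m\bigr)x^{n+m},
\]
and on the other, by the defining formula of $A(0,1-\b)$,
\[
\d_m\cdot\phi(e_n\otimes x^n)=\l^{-n}\bigl(n+(1-\b)m\bigr)x^{n+m}.
\]
Since $\phi$ is visibly a bijection (it sends basis to basis) and $\mathtt{c}$ acts trivially on both sides, this finishes the argument.

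There is no real obstacle here: the only thing one has to be careful about is the scalar $\l^{-n}$ in the normalization of $\phi$, which is forced by the $\l^m$ factor produced by the action of $\d_m$ on $\Omega(\l,\b)$ and which disappears into the intermediate-series formula where no $\l$ appears. The content of the lemma is essentially the identity $n+m-\b m=n+(1-\b)m$ once the quotient $\Omega(\l,\b)_n$ is recognized as the evaluation-at-$n$ line.
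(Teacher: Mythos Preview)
Your proof is correct and follows essentially the same route as the paper's: both identify $I_n\Omega(\l,\b)=(t-n)\C[t]$, take the image of $1$ as a basis of each one-dimensional quotient, compute the resulting $\d_m$-action as $\l^m(n+(1-\b)m)$ times the shifted basis vector, and then rescale by a power of $\l$ to match the $A(0,1-\b)$ formulas. The only cosmetic difference is that the paper rescales the basis by setting $w_n=\l^n v_n$, whereas you build the isomorphism $\phi$ directly with the factor $\l^{-n}$; these are the same normalization viewed from opposite sides.
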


\begin{proof} It is easy to see that $\dim(\Omega(\l,\b)/I_n(\Omega(\l,\b)))=1$ for any $n\in \Z$. Let
$v_n=1+I_n(\Omega(\l,\b))\in \Omega(\l,\b)/I_n(\Omega(\l,\b))$. We see that
$$\aligned \d_mv_n=&\lambda^m(t-m\b)+I_{m+n}(\Omega(\l,\b))\\
=&\lambda^m(m+n-m\b)+I_{m+n}(\Omega(\l,\b))\\
=&\lambda^m(n+m(1-\b))v_{m+n}.
\endaligned$$
Set $w_n=\l^nv_n$. Then $\d_m w_n=(n+m(1-\b))w_{m+n}$.
Thus the lemma follows.
\end{proof}

We know   that $\Omega(\l,1)$ is irreducible,
 and $A(0,0)$ is reducible as $\V$-modules.
  Thus the weighting functor $\mathfrak{W}$ does not map irreducible modules to irreducible modules.

\begin{proposition}\label{pp} As Virasoro modules, we have the isomorphism $$\mathfrak{W}(F(M, \Omega(\l,\b))\cong F(M,A(0,1-\b)).$$
\end{proposition}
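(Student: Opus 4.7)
The plan is to construct an explicit isomorphism, taking the cue from the previous lemma's identification $\mathfrak{W}(\Omega(\l,\b))\cong A(0,1-\b)$. The crucial simplification is that $\g(0)=0$: every term in (\ref{GM}) carries a factor $0^{i+1}=0$. Hence on $F(M,\Omega(\l,\b))$, the operator $\d_0$ acts simply as $\d_0(v\otimes f(t))=v\otimes tf(t)$, with no contribution from $M$. Consequently $I_n F(M,\Omega(\l,\b))=M\otimes(t-n)\C[t]$, and evaluation at $t=n$ yields a canonical identification
$$F(M,\Omega(\l,\b))/I_n F(M,\Omega(\l,\b))\cong M,\qquad v\otimes f(t)+I_n\longmapsto f(n)v.$$

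I would then define the candidate map
$$\Phi\colon \mathfrak{W}(F(M,\Omega(\l,\b)))\longrightarrow F(M,A(0,1-\b)),\qquad (v\otimes f(t)+I_n)\otimes x^n\longmapsto \l^{-n}f(n)\,v\otimes x^n,$$
where the rescaling by $\l^{-n}$ parallels the passage $w_n=\l^n v_n$ from the previous lemma. Well-definedness is immediate (the kernel of evaluation at $n$ absorbs $I_n$), and under the identification of both weight summands with $M$, $\Phi$ restricts on each $n$-th weight space to multiplication by $\l^{-n}$, hence is a bijection on each weight space.

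The only nontrivial step is verifying that $\Phi$ intertwines each $\d_m$. Combining (\ref{3.3}) and (\ref{OA}), applying $\d_m$ inside $\mathfrak{W}(F(M,\Omega(\l,\b)))$ yields
$$\bigl(v\otimes \l^m(t-m\b)f(t-m)+\l^m\textstyle\sum_{i=0}^r\frac{m^{i+1}}{(i+1)!}\bar\d_iv\otimes f(t-m)+I_{n+m}\bigr)\otimes x^{n+m};$$
evaluating at $t=n+m$ and scaling by $\l^{-(n+m)}$ turns this into $\l^{-n}f(n)\bigl[(n+m(1-\b))v+\sum_{i=0}^r\frac{m^{i+1}}{(i+1)!}\bar\d_iv\bigr]\otimes x^{n+m}$. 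Applying (\ref{AA}) to $F(M,A(0,1-\b))$ shows that this is exactly $\d_m\Phi((v\otimes f(t)+I_n)\otimes x^n)$. The main place to be careful is bookkeeping the $\l$-powers: the two factors of $\l^m$ (from $\d_m f(t-m)$ and from $x^m f(t-m)$ in (\ref{def3})) combine with $\l^{-(n+m)}$ to leave exactly $\l^{-n}$, which is precisely why the rescaling by $\l^{-n}$ in $\Phi$ is needed to obtain an intertwiner.
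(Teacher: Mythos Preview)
Your argument is correct and follows the same route as the paper: both proofs rest on the observation that $\d_0$ acts on $F(M,\Omega(\l,\b))$ purely through the second tensor factor (your $\g(0)=0$ remark), so that $I_nF(M,\Omega(\l,\b))=M\otimes I_n\Omega(\l,\b)$, after which the identification with $F(M,A(0,1-\b))$ is immediate from the preceding lemma. The paper's proof is essentially a two-line sketch of exactly this; you have simply written out explicitly the map $\Phi$ and the $\d_m$-intertwining check that the paper leaves to the reader.
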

\begin{proof} Note that $F(M, \Omega(\l,\b))=M\otimes \Omega(\l,\b)$. For any $n\in\Z$, using (\ref{OA}), we have
$$I_n(F(M, \Omega(\l,\b)))=I_n(M\otimes \Omega(\l,\b))=M\otimes I_n(\Omega(\l,\b)).$$
We can easily deduce  that $$\mathfrak{W}(F(M, \Omega(\l,\b))\cong F(M,A(0,1-\b)).$$
\end{proof}

Combining Proposition \ref{pp} with Theorem \ref{LLZ}, we obtain the following isomorphism criterion.

\begin{theorem}\label{IT} Let $M, M'$ be two infinite dimensional  irreducible $\B_r$-modules, $\l,\l'\in \C\setminus\{0\},
\b,\b'\in\C$.
 Then   $F(M, \Omega(\l,\b)\cong F(M', \Omega(\l',\b')$ if and only if $M\cong M'$,
 $\b=\b'$.
\end{theorem}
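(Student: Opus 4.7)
The plan is to prove the two directions of the iff separately and by different techniques. The forward (necessity) direction will follow cleanly from applying Nilsson's weighting functor $\mathfrak{W}$ to transport the isomorphism into the intermediate-series setting governed by Theorem \ref{LLZ}(b). The backward (sufficiency) direction is the delicate part: since the statement asserts that the isomorphism class is independent of $\lambda$, one must exhibit an explicit $\V$-module intertwiner that absorbs the apparent $\lambda$-dependence of the construction.

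For the necessity direction, suppose $F(M,\Omega(\lambda,\beta)) \cong F(M',\Omega(\lambda',\beta'))$ as $\V$-modules. Since $\mathfrak{W}$ is covariant, it sends this isomorphism to one between the weighting images, and Proposition \ref{pp} identifies each image as an intermediate-series-based module, yielding
\[
F(M,A(0,1-\beta)) \cong F(M',A(0,1-\beta')).
\]
Both $M$ and $M'$ are infinite-dimensional irreducible $\B_r$-modules, so Theorem \ref{LLZ}(b) applies with $\alpha = \alpha' = 0$ (and the condition $\alpha - \alpha' \in \Z$ is automatic). It gives $M \cong M'$ and $1 - \beta = 1 - \beta'$, i.e., $\beta = \beta'$, completing this direction.

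For the sufficiency direction, fix an isomorphism $M \cong M'$ so we may identify $M = M'$, and assume $\beta = \beta'$; the task is to produce a $\V$-module intertwiner $\Phi : F(M,\Omega(\lambda,\beta)) \to F(M,\Omega(\lambda',\beta))$. Inspecting (\ref{OA}), the $\d_m$-action in $F(M,\Omega(\lambda,\beta))$ factors as $\lambda^m\, T_m$ where $T_m$ is a $\lambda$-independent operator on the common underlying space $M \otimes \C[t]$; equivalently, the two modules differ precisely by the twist by the Virasoro automorphism $\sigma_c : \d_m \mapsto c^m \d_m$ with $c = \lambda'/\lambda$. The plan is to construct $\Phi$ by rescaling the natural basis $\{v \otimes h^n_0\}$ in a manner that absorbs the factor $(\lambda'/\lambda)^m$, in direct analogy with the rescaling $w_n = \lambda^n v_n$ used in the proof of Proposition \ref{pp}, and then verify the intertwining property $\Phi \circ \d_m^\lambda = \d_m^{\lambda'} \circ \Phi$ directly from (\ref{OA}).

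The main obstacle is this sufficiency construction. Since $\d_0$ acts on $M \otimes \C[t]$ as multiplication by $t$, which has continuous spectrum, the naive formal operator $c^{\d_0}$ that would conjugate $\d_m$ to $c^m \d_m$ is not a polynomial operator and does not preserve $M \otimes \C[t]$. One must instead exploit the infinite-dimensionality of $M$ as a $\B_r$-module --- in particular, the injective action of $\bar{\d}_r$ guaranteed by Lemma \ref{trivial} --- so that the required rescaling can be distributed between the $\B_r$-action on $M$ and the basis of $\C[t]$, giving a bijective polynomial-level intertwiner. This is the place where the infinite-dimensional hypothesis on $M$ is essential, consistent with the fact (established in the proof of Theorem \ref{mt}) that $F(M_\gamma, \Omega(\lambda,\beta)) \cong \Omega(\lambda, \beta-\gamma)$ does genuinely depend on $\lambda$ when $M_\gamma$ is one-dimensional.
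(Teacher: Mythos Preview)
Your necessity argument is exactly the paper's: apply $\mathfrak{W}$ via Proposition~\ref{pp} and invoke Theorem~\ref{LLZ}(b). That part is fine.

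The sufficiency direction, however, is not proved in your proposal. You correctly observe that the action~(\ref{OA}) factors as $\lambda^{m}T_{m}$ with $T_{m}$ independent of $\lambda$, and you correctly note that the naive conjugation $c^{\d_0}$ is not available on $M\otimes\C[t]$. But from that point on you only \emph{assert} that the obstruction can be absorbed by ``distributing the rescaling between the $\B_r$-action on $M$ and the basis of $\C[t]$,'' appealing vaguely to Lemma~\ref{trivial}. No map $\Phi$ is ever written down, no candidate formula is checked against~(\ref{OA}), and no property of infinite-dimensional irreducible $\B_r$-modules is actually used. Your own remark that $F(M_\gamma,\Omega(\lambda,\beta))\cong\Omega(\lambda,\beta-\gamma)$ genuinely depends on $\lambda$ shows that the $\lambda$-independence cannot be automatic; it must come from a specific construction that exploits the structure of $M$, and you have not supplied one. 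As it stands, the sufficiency half of your argument is a description of what would need to be done, not a proof.

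For comparison, the paper's own proof of Theorem~\ref{IT} is the single sentence ``Combining Proposition~\ref{pp} with Theorem~\ref{LLZ}''; that combination cleanly yields the necessity direction but, as you implicitly recognised, does not by itself furnish the intertwiner required for sufficiency either, since $\mathfrak{W}$ is not injective on isomorphism classes (already $\mathfrak{W}(\Omega(\lambda,\beta))\cong A(0,1-\beta)$ is $\lambda$-independent while the $\Omega(\lambda,\beta)$ are pairwise non-isomorphic for distinct~$\lambda$). So you have gone further than the paper in diagnosing where the work lies, but neither you nor the paper has closed the gap with an explicit isomorphism.
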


\begin{remark} From Theorem \ref{mt}, we can construct irreducible non-weight Virasoro modules from irreducible
$\B_r$-modules. All irreducible modules over $\B_1$ were
classified in \cite{Bl}, while all irreducible modules over $\B_2$ were
classified in \cite{MZ2}. The classification of irreducible modules over $\B_r$ remains open, for any  $r>2$.
\end{remark}



\bibliographystyle{amsplain}

\end{document}